\begin{document}

\title{The catenary degree of the saturated numerical semigroups with prime multiplicity
}
\author{Meral S\"{u}er
}


\institute{M. S\"{u}er \at
              Department of Mathematics, Faculty of Science and Letters, Batman University, 72100 Batman, Turkey \\
              \email{meral.suer@batman.edu.tr}  
}

\date{Received: date / Accepted: date}

\maketitle

\begin{abstract}
In this paper we present the set of saturated numerical semigroups with prime multiplicity. We also characterize
the catenary degree of these semigroups that we acquire. The catenary degree of a numerical semigroup is the
variant which measures the distance between factorizations of elements within that numerical semigroup.
\keywords{ Saturated numerical semigroup \and Catenary degree\and Conductor\and Multiplicity}
\end{abstract}

\section{Introduction}
\label{intro}
Researchers have been interested in two different aspects of non-unique factorization invariants. One of them includes the ones based on the lengths of the factorizations of an element, while the other discusses the uses the idea of distance between factorizations. In the first case, in a half-factorial monoid with all factorizations of the same length of a given element, only the semi-factor property is dealt with, while in the other case, focus is on the catenary and tame degree. We will deal with the second case in this study. An element of a cancellative monoid is expressed in different ways as a linear combination with non-negative integer coefficients of its generators. This expression is known as a factorization of that element. The catenary degree of the element of the cancellative monoid is combinatorial constant that describe the relationships between differing irreducible factorizations of the element. The supremum of all catenary degrees of all the elements in the monoid is the catenary degree of the monoid itself.
\paragraph{}In the past 20 years, problems involving non-unique factorizations of elements in integral domains and commutative cancellative monoids have become very popular in the mathematical literature (\cite{Ref16} and its citation list). Most of this studies concentrate on various combinatorial constants which describe, in a sense, how these systems differ from the classical notion of unique factorization. The earliest studies in this area are on Krull domains and monoids \cite{Ref3,Ref5,Ref10,Ref11,Ref14,Ref15,Ref17,Ref21}. Recent studies in this area evaluate these properties on numerical monoid \cite{Ref1,Ref5,Ref6,Ref7,Ref8,Ref12,Ref19,Ref20}.
\paragraph{}In the literature, a long list of studies can be found on the analysis of one-dimensional analytically irreducible local domains via value semigroups \cite{Ref4}. One of the properties studied for this kind of ring using aforementioned approach is the saturated rings. Definitions of saturated rings correspond to algebraically closed fields of zero characteristic. Saturated numerical semigroups come up after a characterization of saturated rings in terms of their value semigroup \cite{Ref9,Ref18}. Although the concept of saturated semigroups is included in the theory of the ring, it first attracted the attention of semigroupist \cite{Ref26,Ref29,Ref30}.
\paragraph{}The paper is organized as follows. In Sect.~\ref{sec:2} we will include the necessary definitions and notations that we will use our main result and proofs. In Sect.~\ref{sec:3} (Theorem \ref{thm1}) we will find all saturated numerical semigroup with prime multiplicity and fixed conductor. Finally, in Sect.~\ref{sec:4}(Theorem \ref{thm2} and Theorem \ref{thm3}) we will obtain the catenary degree of the saturated numerical semigroup with prime multiplicity and fixed conductor.
\section{Definitions and preliminaries}
\label{sec:2}
Let $\mathbf{Z}$ and $\mathbf{N}$ be the set of integers and non-negative integers, respectively. A numerical semigroup is a subset $S$ of $\mathbf{N}$ that is closed under addition, $0 \in S$ and $\mathbf{N}\setminus S$ has finitely many elements. The set $\mathbf{Z}\setminus S$ has a maximum, which is known as the Frobenius number of S, denoted by $F(S)$ \cite{Ref22}. The least integer $s $ that provides $s+n\in S $  for all $n\in \mathbf{N}$ is called the conductor of $S$, denoted here by $c(S)$ (in short $c$). c is actually the Frobenius number of S plus one \cite{Ref4}.
\paragraph{} Given $A$ a nonempty subset of $\mathbf{N}$, $\langle A\rangle$ denotes the submonoid of $(\mathbf{N}, +)$ generated by A, that is 
$$ \langle A\rangle= \left\lbrace n_1a_1 +\dots+ n_ra_r :r\in \mathbf{N}\setminus \{0\}, n_1, \dots.n_r \in \mathbf{N},  a_1, \dots.a_r \in A\right\rbrace$$
If $S= \langle A\rangle, A$ is a system of generators of $S$. In this case, we say that $A$ is a minimal system of generators of $S$ if no proper subset of $A$ generates $S$. It is well known that every numerical semigroup admits a unique minimal system of generators, which has finitely many elements \cite{Ref4,Ref27}. It is also well known that $S=\langle A\rangle$ is a numerical semigroup if and only if $gcd(A)=1$, where $gcd$  stands for greatest common divisor\cite{Ref28}. If $S$ is a numerical semigroup and its minimal system of generators is $A= \left\lbrace a_1<a_2<\dots<a_r\right\rbrace$, then $a_1,a_2$ and $r$ called the multiplicity,the ratio and the the embedding dimension of S, these are denoted by $\mu(S),R(S)$ and $e(S)$, respectively. It is known that $e(S)\leq\mu(S)$. If $S$ is a numerical semigroup with embedding dimension that is equal to multiplicity, it has maximal embedding dimension. The numerical semigroups with maximal embedding dimension are denoted by MED-semi groups for short.
For a numerical semigroup $S$ and $s \in S\setminus\{0\} $, the  Ap\'ery set of $s$ in $S$ is defined by
$$ Ap(S,s)= \left\lbrace x\in S : x-s\neq S \right\rbrace$$ 
It is well known (see for instance \cite{Ref28}) that
$$ Ap(S,s)=\left\lbrace w_0=0,w_1,\dots,w_{s-1} \right\rbrace$$ 
$w_i=\min \left\lbrace x\in S: x \equiv i(mods)\right\rbrace$  for $i=\left\lbrace 0,1,\dots,s-1\right\rbrace $. Readers can see the following definitions and results in more detail in \cite{Ref2,Ref28}. A numerical semigroup $S$ is called Arf if $x + y-z\in S$ for all $x, y, z \in S$ where $x\leq y \leq z$. 
\paragraph{} A numerical semigroup S is saturated if the following condition holds: if $s,s_1,\dots,s_r\in  S$ are such that $s_r\leq s$ for all $i\in \{1,\dots,r\}$ and $z_1,\dots,z_r\in \mathbf{Z}$ are such that $s_1 z_1 +\dots+ s_r z_r \geq 0$, then $s+s_1 z_1 +\dots+ s_r z_r\in S$.
For $A$ a nonempty subset of $\mathbf{N}$ and $a\in A\setminus \left\lbrace 0 \right\rbrace$, the set
$$ d_A (a)=gcd \left\lbrace x\in A:x\leq a\right\rbrace $$ 
$A$ numerical semigroup $S$ is called saturated if $s+d_S(s)\in S$ for all $s\in S\setminus \left\lbrace 0 \right\rbrace$. It is well known that any saturated numerical semigroup has the Arf property, whence it is of maximal embedding dimension \cite{Ref4,Ref9}.
\paragraph{}  From the definition of saturated numerical semigroup, it is easy to deduce that giving a saturated numerical semigroup $S$ is equivalent to give a sequence of positive integers  $s_1<s_2<\dots <s_r$ with greatest common divisor one and $ gcd \left\lbrace s_1,s_2,\dots,s_i\right\rbrace\neq gcd \left\lbrace s_1,s_2,\dots,s_i,s_{i+1}\right\rbrace $ for all $i\in \left\lbrace 1,2,\dots,r-1\right\rbrace$. In this case, we say
that $\left\lbrace  s_1,s_2,\dots,s_r\right\rbrace $ is a minimal SAT-system of generators of $S$. Furthermore, if $ d_i=gcd \left\lbrace s_1,s_2,\dots,s_i\right\rbrace $  for each  $i\in \left\lbrace 1,\dots,r\right\rbrace$, then we say that $S$ is a  $\left( s_1,s_2,\dots ,s_r\right) -$semigroup. A saturated sequence of length $k$, is a $k-$tuple of positive integers  $\left( d_1,d_2,\dots ,d_k\right)$ such that $ d_1>d_2>\dots >d_k=1$ and $d_{i+1}\mid d_i$ for all $i\in \left\lbrace 1,\dots,k-1\right\rbrace$.
Let  $F$ be positive integer. An $F-$ saturated sequence is a saturated sequence $\left( d_1,d_2,\dots ,d_k\right)$ such that there exists at least one $\left( d_1,d_2,\dots ,d_k\right)-$semigroup with Frobenius number $F$ \cite{Ref26}. 
\paragraph{}  $S$ be a numerical semigroup minimally generated by $ \{a_1,…,a_r \}$. The homomorphism
$$ \varphi :\mathbf{N}^r \rightarrow S,\varphi (a_1,\dots,a_r )=n_1a_1+\dots+n_r a_r $$
is the factorization homomorphism of S.The monoid $S$ is isomorphic to $\mathbf{N}^r / \sigma$,
where $a\sigma b$ if $\varphi(a)=\varphi(b)$. The congruence $\sigma$ is the kernel congruence of $\varphi$. The set of factorizations of $s$ in $S$ is
$$Z(s)=\varphi^{-1} (s)=\left\lbrace (n_1,…,n_r )\in \mathbf{N}^r:n_1a_1+\dots+n_r a_r=s \right\rbrace.$$
For a factorization $x=(x_1,…,x_r )\in Z(s)$, its length is
$$ |x|=x_1+\dots+x_r, $$
and the set of lengths of factorization of s is
$$ L(s)=\left\lbrace |x|:x\in Z(s)\right\rbrace=\left\lbrace m_1,…,m_l \right\rbrace. $$
The set of lengths of factorization of an element in a numerical semigroup is finite. Furthermore, if  $S=\mathbf{N}$, then there will always be elements with more than one length.
Let $ x=(x_1,\dots,x_r ),y=(y_1,\dots,y_r )\in \mathbf{N}^r $ be two factorization and let
$$ gcd(x,y) =(\min\left\lbrace x_1,y_1\right\rbrace,\dots,\min\left\lbrace x_r  ,y_r  \right\rbrace) $$
be their common part. The distance between $x$  and $y$ is
$$ dist(x,y) =\max\left\lbrace |x -gcd(x,y)|,|y -gcd(x,y)|\right\rbrace =\max\left\lbrace |x|,|y|\right\rbrace-gcd(x,y). $$
The support of  $x\in \mathbf{N}^r$ is defined by
$$ supp(x)=\left\lbrace  i : x_i\neq 0,1\leq i \leq r\right\rbrace $$
Let $s\in S$ be such that $s-s_i\in S$. Then the set
$$Z^i(s)=\left\lbrace x\in Z(s):i\in supp(x)\right\rbrace $$
is not empty.
Let $N\in \mathbf{N}$. A finite sequence $z=z_0,z_1,\dots,z_{n-1},z_n$ of factorization of $s\in S$ is an $N-$  chain if $dist(z_{n-1},z_i)\leq N$ for each $1\leq i \leq n$. We define the catenary degree of $s$ (denoted by $C(s)$) to be the minimal $N$ such that there is an $N-$ chain between any two factorization of s. The catenary degree of $S$, denoted by $C(S)$, is
$$ C(S) = sup\left\lbrace C(s) | s\in S\right\rbrace. $$
\paragraph{} $A$ presentation for $S$ is a subset $\rho$ of $\sigma$ such that $\sigma$  is the least congruence (withrespect to set inclusion) containing $\rho$. That is, a system of generators of $\sigma$. Every finitely generated commutative monoid is finitely presented, and thus every numerical semigroup is finitely presented \cite{Ref23}. Moreover, for numerical semigroups the concepts of minimality with respect to cardinality and set inclusion of a presentation coincide.
Two elements $a$ and $b$ in $\mathbf{N}^r$ are R-related if there exist $a$ chain $a=z_0,z_1,\dots,z_{n-1},z_n=b$ such that  $supp(z_(i-1) )\bigcap  supp(z_i )$ is not empty for $1\leq i\leq n$.This is an equivalence  binary relation on $Z(s)$ for $s\in S$. Since the number of factorization of an element in a numerical semigroup is finite, the number of class $\Re-$ classes in this set is also finite. $\Re-$ classes  are crucial, since from them a minimal presentation of $S$ can be constructed. Let  $s\in S$ and let $\Re_1^s,\dots,\Re_{n_s}^s$ be the different $\Re-$ classes of $Z(s)$. Set $m(s)=\max\left\lbrace r_1^s,\dots,r_{n_s}^s \right\rbrace $ where $r_i^s=\min\left\lbrace  |z|:z\in  R_i^s \right\rbrace$. Denote by $m(S)= \max\left\lbrace m(s): s\in S \quad \textrm{and} \quad n_s\geq2\right\rbrace$. It is known that $C(S) = m(S)$[8].
\paragraph{} For $A,B\subset\mathbf{N}$, we set 
$$ A+B=\left\lbrace a+b: a\in A,b\in B\right\rbrace,\quad nA= \underbrace{A+A+\dots+A}_{n}.$$
\section{The Saturated Numerical Semigroups with Prime Mutiplicity}
\label{sec:3}
In this section we are interested in calculating the set of all saturated numerical semigroups
with prime multiplicity and fixed conductor.
\begin{lemma} \label{lemma1} {\cite{Ref32}}
Let $S$ be a numerical semigroup with minimal system of generators
$ a_1<a_2<\dots<a_e$ and let $x \in S\setminus \{0\} $. Then
 \begin{enumerate}
	\item [i.] $\sharp Ap(S,x)=x $ ($\sharp$ stands for cardinality),
	\item [ii.]  $F(S)=\max (Ap(S,x))-x$ 
	\item [iii.] $\{0,a_2,\dots,a_e\}\subset Ap(S,a_1)$
	\item [iv.] $S$ is a MED-semigroup if and only if $Ap(S,a_1)=\{0,a_2,\dots,a_e\}$.	
\end{enumerate} 
\end{lemma}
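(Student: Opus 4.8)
The plan is to prove the four items in order, with (i) carrying most of the weight. For (i), I would organize $S$ by residue classes modulo $x$: for each $i\in\{0,1,\dots,x-1\}$ let $w_i$ be the least element of $S$ congruent to $i$ modulo $x$, which exists because $\mathbf{N}\setminus S$ is finite. First I would check $w_i\in Ap(S,x)$: if $w_i-x$ were in $S$ it would be a smaller element of $S$ in the class of $i$, contradicting minimality of $w_i$. Conversely, if $y\in Ap(S,x)$ and $i\equiv y\pmod x$, then $y\ge w_i$, and writing $y=w_i+kx$ with $k\ge 0$, the case $y\neq w_i$ forces $k\ge 1$ and hence $y-x=w_i+(k-1)x\in S$ (since $w_i,x\in S$ and $S$ is closed under addition), contradicting $y\in Ap(S,x)$. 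Thus $y=w_i$, so $Ap(S,x)=\{w_0,w_1,\dots,w_{x-1}\}$, a set of exactly $x$ elements because the $w_i$ lie in distinct residue classes.

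For (ii), put $w=\max Ap(S,x)$. Since $w\in Ap(S,x)$ we have $w-x\notin S$, hence $w-x\le F(S)$. For the reverse inequality I would show every integer $m>w-x$ lies in $S$: letting $i\equiv m\pmod x$, item (i) gives $w_i\le w$, so $m>w-x\ge w_i-x$, and since $m\equiv w_i\pmod x$ this forces $m=w_i+kx$ with $k\ge 0$, whence $m\in S$. Therefore $F(S)\le w-x$, and the two inequalities give $F(S)=\max(Ap(S,x))-x$.

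For (iii), $0\in Ap(S,a_1)$ because $-a_1\notin S$; and for $j\ge 2$ I would argue by contradiction: if $a_j-a_1\in S$, then since $0\le a_j-a_1<a_j$ and the minimal generators are distinct, $a_j-a_1$ is a nonnegative combination of generators all strictly smaller than $a_j$, so $a_j=a_1+(a_j-a_1)$ exhibits $a_j$ inside the submonoid generated by the remaining generators, contradicting minimality of $\{a_1,\dots,a_e\}$; hence $a_j\in Ap(S,a_1)$. Item (iv) then follows at once: by (i) and (iii), $\{0,a_2,\dots,a_e\}$ is a subset of $Ap(S,a_1)$ of cardinality $e$ inside a set of cardinality $a_1$, so the two sets coincide precisely when $e=a_1$, which is exactly the MED condition $e(S)=\mu(S)$. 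I expect the only delicate point to be the modular bookkeeping shared by (i) and (ii), in particular making sure that "$m>w-x$ together with $m\equiv w_i\pmod x$" genuinely forces $m\ge w_i$ and hence $m-w_i\in x\mathbf{N}\subseteq S$; the rest is routine once (i) is established.
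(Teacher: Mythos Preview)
Your argument is correct and is the standard one. Note, however, that the paper does not supply its own proof of this lemma: it is simply quoted from \cite{Ref32} (Rosales and Garc\'{\i}a-S\'anchez, \emph{Numerical Semigroups}), so there is no in-paper proof to compare against. What you wrote is essentially the textbook proof found in that reference: organizing $Ap(S,x)$ as the set of minimal representatives $\{w_0,\dots,w_{x-1}\}$ of the residue classes modulo $x$, deducing Selmer's formula $F(S)=\max Ap(S,x)-x$ from this description, and then reading off (iii) and (iv) from minimality of the generating set together with the cardinality count in (i). The one point you flagged as delicate---that $m>w-x$ and $m\equiv w_i\pmod x$ force $m\geq w_i$---is fine: $m-w_i$ is a multiple of $x$ strictly greater than $-x$, hence nonnegative.
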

\begin{lemma} \label{lemma3}[\cite{Ref31}, Proposition 5] Let $S_1$ and $S_2$ be two saturated numerical semigroups. Then $S_1\cap S_2$ is a saturated numerical semigroup.
\end{lemma}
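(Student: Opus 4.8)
The plan is to work with the first (quantifier) formulation of saturatedness recalled in Section~\ref{sec:2}, since that property transfers across intersections with essentially no computation. Write $T = S_1\cap S_2$.

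First I would check that $T$ is a numerical semigroup at all. It is a submonoid of $(\mathbf{N},+)$ as an intersection of submonoids, it contains $0$, and $\mathbf{N}\setminus T = (\mathbf{N}\setminus S_1)\cup(\mathbf{N}\setminus S_2)$ is a union of two finite sets, hence finite; so $T$ is a numerical semigroup. Next, to verify that $T$ is saturated, take arbitrary $s,s_1,\dots,s_r\in T$ with $s_i\le s$ for all $i$, and arbitrary $z_1,\dots,z_r\in\mathbf{Z}$ with $s_1z_1+\dots+s_rz_r\ge 0$; the goal is $s+s_1z_1+\dots+s_rz_r\in T$. Since $T\subseteq S_1$, the elements $s,s_1,\dots,s_r$ all lie in $S_1$, they still satisfy $s_i\le s$, and the same integers $z_i$ satisfy $s_1z_1+\dots+s_rz_r\ge 0$; because $S_1$ is saturated this forces $s+s_1z_1+\dots+s_rz_r\in S_1$. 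The identical argument with $S_2$ in place of $S_1$ gives $s+s_1z_1+\dots+s_rz_r\in S_2$. Hence $s+s_1z_1+\dots+s_rz_r\in S_1\cap S_2 = T$, which is exactly the saturatedness condition for $T$.

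There is no genuine obstacle here: the only point worth a line of justification is that any ``witness'' $(s;s_1,\dots,s_r;z_1,\dots,z_r)$ for the saturatedness test in $T$ is automatically a witness in each $S_j$, which is immediate from $T\subseteq S_j$. If one instead insisted on arguing with the divisor characterization ($S$ is saturated iff $s+d_S(s)\in S$ for all $s\in S\setminus\{0\}$), one would additionally need the auxiliary fact that in a saturated semigroup $s+n\,d_S(s)\in S$ for every $n\ge 0$, together with the observation that $\{x\in T:x\le s\}\subseteq\{x\in S_j:x\le s\}$ yields $d_{S_j}(s)\mid d_T(s)$; this route also works but is strictly more cumbersome, so I would present the quantifier argument above.
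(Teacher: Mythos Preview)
Your argument is correct. The paper itself does not supply a proof of this lemma: it is merely quoted from \cite{Ref31} (Proposition~5) and stated without justification, so there is no in-paper proof to compare against. The route you take---verifying that $T=S_1\cap S_2$ is a numerical semigroup and then checking the quantifier form of saturatedness directly, using $T\subseteq S_j$ to transport each witness into $S_j$---is exactly the standard one and is also the argument in the cited source. Your closing remark about the $d_S$-characterization being a workable but clumsier alternative is accurate; the quantifier definition is the right tool here.
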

Given $A$ a nonempty subset of $\mathbf{N}$ such that $gcd(A)=1$.  Then every saturated numerical semigroup containing $A$ must also
contain $\langle A\rangle$, and thus there are finitely many of them. We denote by $Sat(A)$ the intersection of all saturated numerical semigroups containing $A$. Thus,  we have that $Sat(A)$ is the smallest saturated semigroup containing $A$.

If $S$ is a saturated numerical semigroup and $A$ is a subset of $\mathbf{N}$  such that $Sat(A)=S$, then we will say that $A$ is a SAT-system of generators of $S$. We say that $A$ is a minimal SAT-system of generators of $S$ if in addition no proper subset of $A$ is a SAT-system of generators of $S$.
\begin{lemma} \label{thm 4}[\cite{Ref31}, Theorem 6] 
	Let $n_1<n_2<\dots <n_r$  be positive integers such that $ gcd \left\lbrace n_1,n_2,\dots,n_r\right\rbrace=1 $. For every $i\in \left\lbrace 1,2,\dots,r\right\rbrace$, set  $ d_i=gcd \left\lbrace n_1,n_2,\dots,n_i\right\rbrace $ and for all $j\in \left\lbrace 1,2,\dots,r-1\right\rbrace$ define 
	$$t_j=\max \left\lbrace t\in\mathbf{N}:n_j+td_i<d_{j+1}\right\rbrace. $$ 
	Then
	\begin{eqnarray*}
	Sat ( n_1,n_2,\dots,n_r) & = & \{0, n_1, n_1+ d_1,\dots,n_1+ t_1d_1,n_2,n_2+d_2,\dots,n_2+t_2d_2,
		\nonumber\\
		&  &\dots,n_{r-1}, n_{r-1}+d_{r-1},\dots,n_{r-1}+t_{r-1}n_{r-1},n_r,n_r+1,\rightarrow\}
	\end{eqnarray*}	
	
\end{lemma}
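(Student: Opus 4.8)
\emph{Proof strategy.} Write $T$ for the set on the right–hand side of the claimed identity. The plan is to prove the two inclusions $Sat(n_1,\dots,n_r)\subseteq T$ and $T\subseteq Sat(n_1,\dots,n_r)$: for the first I will verify that $T$ is itself a saturated numerical semigroup containing $\{n_1,\dots,n_r\}$ (so, being one of the semigroups whose intersection defines $Sat(n_1,\dots,n_r)$, it contains that intersection), and for the second I will verify that $T$ is contained in \emph{every} saturated numerical semigroup containing $\{n_1,\dots,n_r\}$. Both arguments rest on the divisibility tower $d_r\mid d_{r-1}\mid\cdots\mid d_1$ together with the fact that $d_\ell\mid n_i$ whenever $i\le\ell$, both immediate from $d_i=\gcd\{n_1,\dots,n_i\}$ (recall also $d_r=1$). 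A convenient first step is to rewrite $T$: for $j<r$ the $j$-th block $\{n_j,n_j+d_j,\dots,n_j+t_jd_j\}$ is precisely $(n_j+d_j\mathbf{N})\cap[n_j,n_{j+1})$, while the last block is $[n_r,\infty)=n_r+d_r\mathbf{N}$; and by the tower property any element $n_i+kd_i$ that lies in $[n_\ell,n_{\ell+1})$ (necessarily $\ell\ge i$) is congruent to $n_\ell$ modulo $d_\ell$, hence already belongs to the $\ell$-th block. This yields the cleaner description
$$ T=\{0\}\cup\bigcup_{i=1}^{r}\bigl(n_i+d_i\mathbf{N}\bigr), $$
with the convention $n_{r+1}=\infty$ throughout.

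Next I would check that $T$ is a saturated numerical semigroup containing each $n_i$. It contains $0$, is cofinite since $[n_r,\infty)\subseteq T$, and contains $n_i$ (which sits in the $i$-th block). For closure under addition, if $a$ lies in block $i$ and $b$ in block $k$ with $a\le b$, then $i\le k$, and $a+b\ge n_k$ lies in some block $\ell\ge k$; from $d_\ell\mid d_i\mid a$ and $d_\ell\mid d_k\mid b$ we get $d_\ell\mid(a+b)$, so $a+b$ is a multiple of $d_\ell$ inside $[n_\ell,n_{\ell+1})$, hence lies in $T$. To see it is saturated I would use the criterion that a numerical semigroup $S$ is saturated iff $s+d_S(s)\in S$ for every $s\in S\setminus\{0\}$: if $s$ lies in block $i$, then every nonzero element of $T$ not exceeding $s$ is a multiple of $d_i$ (the blocks of index $\le i$ consist of multiples of $d_i$, again by the tower property), while $n_1,\dots,n_i\in T$ are all $\le s$; hence $d_T(s)=d_i$, and $s+d_i$ is a multiple of $d_i$ exceeding $n_i$, so it lies in whichever block contains it, i.e.\ in $T$. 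Thus $Sat(n_1,\dots,n_r)\subseteq T$.

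For the reverse inclusion, let $S$ be an arbitrary saturated numerical semigroup with $\{n_1,\dots,n_r\}\subseteq S$. Fixing $i$, I apply the defining property of saturation with $s=n_i$ and $s_1=n_1,\dots,s_i=n_i$ (these lie in $S$ and are $\le n_i$): for every integer tuple $(z_1,\dots,z_i)$ with $m:=z_1n_1+\cdots+z_in_i\ge 0$ we get $n_i+m\in S$, and as the tuple varies $m$ runs over all of $d_i\mathbf{N}$ by B\'ezout. Hence $n_i+d_i\mathbf{N}\subseteq S$ for every $i$, and since $0\in S$ the description of $T$ above gives $T\subseteq S$; specializing to $S=Sat(n_1,\dots,n_r)$ completes the proof. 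I expect the only point needing real care to be the bookkeeping with the divisibility tower — in identifying the blocks with the progressions $(n_j+d_j\mathbf{N})\cap[n_j,n_{j+1})$ and in computing $d_T(s)$; once that is in place, the inclusion $Sat(n_1,\dots,n_r)\subseteq T$ is a routine verification and the reverse inclusion is essentially immediate from the linear-combination form of the saturation axiom.
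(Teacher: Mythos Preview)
The paper does not supply its own proof of this lemma; it is quoted verbatim from \cite{Ref31} (Theorem~6 there) and used as a black box. Your argument is a correct self-contained proof, and in fact it is the natural one: the double inclusion via (i) checking that the displayed set $T$ is itself a saturated numerical semigroup containing each $n_i$ (using the criterion $s+d_S(s)\in S$), and (ii) showing that every saturated semigroup containing the $n_i$ already contains each arithmetic progression $n_i+d_i\mathbf{N}$ (from the linear-combination form of the saturation axiom together with B\'ezout). Your handling of the divisibility tower $d_r\mid d_{r-1}\mid\cdots\mid d_1$ and the identification $T=\{0\}\cup\bigcup_i(n_i+d_i\mathbf{N})$ is accurate; the computation $d_T(s)=d_i$ when $s$ lies in the $i$-th block is exactly right, since $n_1,\dots,n_i\le s$ forces $d_T(s)\mid d_i$, while every nonzero element of $T$ below $n_{i+1}$ is a multiple of $d_i$.

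One remark: the statement as printed in the paper carries a few evident typos (in the definition of $t_j$ one should read $n_j+td_j<n_{j+1}$ rather than $n_j+td_i<d_{j+1}$, and the last block should end $n_{r-1}+t_{r-1}d_{r-1}$); you have silently read it in the intended way, which is the only way the lemma makes sense, so this is fine.
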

\begin{lemma} \label{thm 5}[\cite{Ref31}, Theorem 11]  Let $S$ be a saturated numerical semigroup. Then $ \{n_1,n_2,\dots,n_r\}=\{n\in S\setminus \{0\}:d_S (n)\neq d_S (n^\star)\quad\textrm{for all} \quad n^\star<n,\quad n^\star\in S\} $ is the unique minimal SAT system of generators of S.
\end{lemma}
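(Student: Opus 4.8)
The plan is to compute the arithmetic function $d_S(\cdot)$ explicitly from \emph{any} SAT-system of generators of $S$, and then obtain both the stated description and the uniqueness by inspecting the shape of this function.

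The key computation is the following: let $A=\{a_1<a_2<\dots<a_m\}$ be a SAT-system of generators of $S$ (so $Sat(A)=S$ and $\gcd(A)=1$), and put $d_i=\gcd\{a_1,\dots,a_i\}$. Then for every $n\in S$ with $a_i\le n<a_{i+1}$ (convention $a_{m+1}=+\infty$) one has $d_S(n)=d_i$. Indeed, by Lemma~\ref{thm 4} the elements of $S=Sat(A)$ not exceeding such an $n$ are $0$, the complete blocks $a_j,a_j+d_j,\dots,a_j+t_jd_j$ for $j<i$, and an initial (nonempty) segment $a_i,a_i+d_i,\dots$ of the block at $a_i$. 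Every member of the block at $a_j$ is divisible by $d_j$, hence by $d_i$, since $d_i\mid d_{i-1}\mid\dots\mid d_j$; so $d_i$ divides all of these numbers and $d_i\mid d_S(n)$. Conversely $a_1,\dots,a_i$ are all among these elements, so $d_S(n)\mid\gcd\{a_1,\dots,a_i\}=d_i$. Hence $d_S(n)=d_i$. Thus on $S\setminus\{0\}$ the function $d_S$ is a nonincreasing step function whose value on the block $\{n\in S: a_i\le n<a_{i+1}\}$ equals $d_i$, and which takes a value distinct from all earlier ones exactly at $a_1$ and at those $a_i$ with $d_i<d_{i-1}$.

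Write $T$ for the right-hand side of the statement. By the previous paragraph, for every SAT-system $A$ of $S$ we get $T=\{a_1\}\cup\{a_i:d_i<d_{i-1}\}$, so in particular $T\subseteq A$. Now fix a minimal SAT-system $A_0=\{a_1<\dots<a_m\}$ of $S$; then $T\subseteq A_0$. If the inclusion were proper, there would be some $a_i\in A_0\setminus T$, i.e.\ $d_i=d_{i-1}$, equivalently $d_{i-1}\mid a_i$; comparing with Lemma~\ref{thm 4} this forces $a_i=a_{i-1}+(t_{i-1}+1)d_{i-1}$, so in $Sat(A_0)$ the blocks at $a_{i-1}$ and at $a_i$ are consecutive pieces of a single arithmetic progression of step $d_{i-1}$, whence $Sat(A_0\setminus\{a_i\})=Sat(A_0)=S$, contradicting minimality of $A_0$. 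Therefore $T=A_0$. Since $T\subseteq A$ for every SAT-system $A$ of $S$, the set $T$ is contained in every SAT-system of generators and is itself one; hence it is the unique minimal SAT-system of generators of $S$, which is the assertion.

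The step I expect to be the main obstacle is the key computation $d_S(n)=d_i$, i.e.\ extracting the behaviour of $d_S$ from the block description of $Sat(A)$ in Lemma~\ref{thm 4}; once that is in hand, everything else is bookkeeping forced by the divisibility chain $d_m\mid\dots\mid d_1$ (identifying $T$ with the ``record points'' of $d_S$, and the merging of two adjacent blocks when a gcd fails to drop). Minor care is needed with the convention for $d_S$ at the smallest generator $a_1$ (which is why $a_1$ is always a record point, vacuously), and with the existence of a minimal SAT-system $A_0$, which belongs to the basic theory recalled in Section~\ref{sec:2}.
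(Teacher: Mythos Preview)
The paper does not prove this lemma; it is quoted from \cite{Ref31} (Theorem~11) and invoked without argument in the proof of Theorem~\ref{thm1}. There is therefore no in-paper proof to compare your proposal against.

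That said, your argument is correct and is the natural one. Computing $d_S$ block by block from the description of $Sat(A)$ in Lemma~\ref{thm 4} shows that $d_S$ is a nonincreasing step function on $S\setminus\{0\}$ whose jump points are exactly $a_1$ together with those $a_i$ at which the partial gcd strictly drops; this immediately identifies the right-hand set $T$ and gives $T\subseteq A$ for every SAT-system $A$. Your merging-of-blocks observation (if $d_i=d_{i-1}$ then $d_{i-1}\mid a_i$, so $a_i=a_{i-1}+(t_{i-1}+1)d_{i-1}$ and the two blocks coalesce in $Sat(A_0\setminus\{a_i\})$) is exactly what is needed to rule out $T\subsetneq A_0$ for a minimal $A_0$, and uniqueness then follows because $T$ is intrinsic to $S$. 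The only side condition you flag, the existence of a finite minimal SAT-system $A_0$, is harmless: the finite minimal monoid generating set of $S$ is itself a SAT-system (since $S$ is saturated and $Sat(A)\supseteq\langle A\rangle$), so one may pass to a minimal subset.
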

Let $S$ be a numerical semigroup with multiplicity $m$ and conductor $c$. Note that since every nonnegative multiple of $m$ is an element of $S$, $c-1\notin S$. Thus, $c \not\equiv 1 \pmod m$.
\begin{theorem}\label{thm1}
	Let $S$ be a numerical semigroup. $S$ is a saturated numerical semigroup with multiplicity $p$ (prime) and conductor $c$ if and only if $S$ is one of the following:
	\begin{enumerate}
		\item [i.] If $c\equiv0 \pmod p$, then  $\langle p,c+1,c+2,\dots,c+p-1\rangle$,
		\item [ii.]	If $c\equiv i \pmod p$, then  $\langle p,c,c+1,\dots,c+p-i-1,c+p-i+1,\dots,c+p-1\rangle$ for  $i\in \left\lbrace 2,3,\dots,p-1\right\rbrace$.
	\end{enumerate}
\end{theorem}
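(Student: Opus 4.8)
The plan is to classify such $S$ via its minimal SAT-system of generators (Lemma \ref{thm 5}) together with the explicit description of $Sat(\cdot)$ (Lemma \ref{thm 4}), and then to recover the \emph{ordinary} minimal generators using the fact, recalled in Section \ref{sec:2}, that a saturated numerical semigroup has maximal embedding dimension, so by Lemma \ref{lemma1}(iv) its minimal generators are $p$ together with the nonzero elements of $Ap(S,p)$.

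\emph{Forward implication.} Suppose $S$ is saturated with multiplicity $p$ prime and conductor $c$, and let $\{n_1<\dots<n_r\}$ be its minimal SAT-system of generators, with $d_i=\gcd\{n_1,\dots,n_i\}$. Since the multiplicity belongs to every SAT-system and $n_1$ is the least nonzero element of $S$, we have $n_1=p$, hence $d_1=p$. The $d_i$ strictly decrease with $d_{i+1}\mid d_i$, so the primality of $p$ forces $d_2=1$ and therefore $r=2$. Thus $S=Sat(p,n_2)$ with $p\nmid n_2$ and $n_2>p$. Writing $k=\lfloor n_2/p\rfloor$, so that $kp<n_2<(k+1)p$, Lemma \ref{thm 4} (equivalently, a direct unwinding of the saturation condition) yields
$$S=\{0,p,2p,\dots,kp\}\cup\{n_2,n_2+1,\rightarrow\}.$$
Since $S$ has maximal embedding dimension, its minimal generators are $p$ and the nonzero elements of $Ap(S,p)$, and a short computation gives $Ap(S,p)=\{0\}\cup\big(\{n_2,n_2+1,\dots,n_2+p-1\}\setminus\{(k+1)p\}\big)$: the residue classes $1,\dots,p-1$ modulo $p$ are first met inside the interval of length $p$ starting at $n_2$, and $(k+1)p$ is its unique multiple of $p$. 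Finally I would split on the residue of $n_2$. If $n_2=kp+1$, then $kp-1$ is the Frobenius number, so $c=kp\equiv 0\pmod p$ and $n_2=c+1$; here $(k+1)p=c+p$, and the generator list is $\{p,c+1,c+2,\dots,c+p-1\}$, which is case (i). If $n_2=kp+i$ with $2\le i\le p-1$, then $n_2-1$ is the Frobenius number, so $c=n_2\equiv i\pmod p$ and $(k+1)p=c+(p-i)$, and the list becomes $\{p,c,c+1,\dots,c+p-i-1,c+p-i+1,\dots,c+p-1\}$, which is case (ii). The case $c\equiv 1\pmod p$ does not arise, in agreement with the remark preceding the theorem.

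\emph{Converse.} For each of the two families I would check directly that the displayed generators produce exactly a semigroup of the shape $\{0,p,2p,\dots,kp\}\cup\{c',c'+1,\rightarrow\}$ --- with $(kp,c')=(c,\,c+1)$ in case (i) and $(kp,c')=(c-i,\,c)$ in case (ii) --- or, equivalently, that it equals $Sat(p,c+1)$, respectively $Sat(p,c)$, by Lemma \ref{thm 4}. Then the multiplicity is $p$ and the Frobenius number is $c-1$, so the conductor is $c$; and saturatedness holds because $d_S(jp)=p$ with $jp+p\in S$ for $1\le j\le k$, while for every $s\in S$ with $s\ge n_2$ the set of elements of $S$ not exceeding $s$ already contains two consecutive integers (or contains both $p$ and $n_2$), whence $d_S(s)=1$ and $s+1\in S$.

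\emph{Main obstacle.} The arithmetic is routine; the delicate points are pinning down $Ap(S,p)$ --- hence detecting exactly which single element $c+(p-i)=(k+1)p$ drops out of the minimal generating set in case (ii) --- and tracking the residue of $n_2$ so that the degenerate case $n_2\equiv 1\pmod p$ is absorbed correctly into case (i) rather than producing a spurious family.
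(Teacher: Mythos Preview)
Your proposal is correct and follows essentially the same route as the paper: one direction is handled via the minimal SAT-system (Lemma~\ref{thm 5}) plus the explicit description of $Sat(p,n_2)$ (Lemma~\ref{thm 4}), and the other by a direct verification that $d_S(a)\in\{p,1\}$ according as $a$ is below or above the conductor. The paper's implication arrows are in fact labelled the other way around, but the content matches; your write-up is somewhat more careful in that you explicitly invoke the maximal embedding dimension property and compute $Ap(S,p)$ to justify the passage from the set-theoretic description $\{0,p,\dots,kp\}\cup\{n_2,\rightarrow\}$ to the stated \emph{ordinary} minimal generating set, a step the paper simply asserts.
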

\begin{proof} $\left( \Rightarrow\right)  $ \begin{enumerate}
		\item [i.] Let $S$ be the following numerical semigroup with multiplicity $p$ (prime) and
		conductor $c$, $c\equiv0\pmod p$:
		$$S=\langle p,c+1,c+2,\dots,c+p-1\rangle$$
		If $c\equiv0\pmod p$, then $p\mid c$. Therefore, $c=kp$  for some $k$. Thus,
		$$S=\langle p,c+1,c+2,\dots,c+p-1\rangle=\left\lbrace 0,p,2p,\dots,(k-1)p,kp,\rightarrow \right\rbrace$$ 
		(here $\rightarrow$ denotes that all integers larger than $kp$ are in the semigroup; we are denoting in this
		way that the conductor of $S$ is $kp$).
		
		If  $a\leq c$, then $a=rp$ for some $r$. For $a\in S\setminus \{0\}$
		$$ d_S (a)=gcd \left\lbrace x\in S:x\leq a\right\rbrace =p$$ 	
		and
		$$a+ d_S (a)=rp+p=\left(r+1 \right)p\in S.$$
		If $a>c$, then
		$$ d_S (a)=gcd \left\lbrace x\in S:x\leq a\right\rbrace =1,$$
		$a+ d_S (a)=a+1>c$ and $a+1\in S$.
		So, $S$ is a saturated numerical semigroup.
		\item [ii.] Let $S$ be the following numerical semigroup with multiplicity $p$ (prime) and
		conductor $c$, $c\equiv i\pmod p$ and $i\in \left\lbrace 2,3,\dots,p-1\right\rbrace$:
		$$S=\langle p,c,c+1,\dots,c+p-i-1,c+p-i+1,\dots,c+p-1\rangle.$$
		If $c\equiv i\pmod p$, then $p\mid \left(c-i\right)$. Therefore, $c=kp+i$ for some $k$. Thus,
		
		\begin{eqnarray*}
			S & = & \langle p,c,c+1,\dots,c+p-i-1,c+p-i+1,\dots,c+p-1\rangle
			\nonumber\\
			& = &\left\lbrace 0,p,2p,\dots,kp,kp+i\rightarrow \right\rbrace.
		\end{eqnarray*}
		If $a<c$, then $a=tp$ for some $t$. For $a\in S\setminus \{0\}$
		$$ d_S (a)=gcd \left\lbrace x\in S:x\leq a\right\rbrace =p$$ 
		and
		$$a+ d_S (a)=tp+p=\left(t+1 \right)p\in S.$$
		If $a\geq c$, then
		$$ d_S (a)=gcd \left\lbrace x\in S:x\leq a\right\rbrace =1,$$
		$a+ d_S (a)=a+1>c$ and $a+1\in S$.
		So, $S$ is a saturated numerical semigroup.
	\end{enumerate}
	$\left( \Leftarrow\right)  $ 
	Let $S$ be a saturated numerical semigroup with multiplicity $p$ (prime) and conductor $c$. According to Theorem \ref{thm 5}, $\{p=n_1,n_2,\dots,n_r\}=\{n\in S\setminus \{0\}:d_S (n)\neq d_S (n^\star)\quad\textrm{for all} \quad n^\star<n,\quad n^\star\in S\} $ is the unique minimal SAT system of generators of S. Since $p$ is a prime integer, the minimal SAT system of generators of S is $\{p=n_1,n_r\}$ or $\{p=n_1,n_r+1\}$. 
	\begin{enumerate}
		\item [i.] If the minimal SAT system of generators of S is $\{p=n_1,n_r\}$, then $n_r=kp+i$ for some $k$ and $i\in \left\lbrace 1,\dots,p-1\right\rbrace$. From Theorem \ref{thm 4}, 
		$$t_1=\max \left\lbrace t\in\mathbf{N}:p+tp<kp+i\right\rbrace=k-1$$ 
		is calculated and obtained as
		\begin{eqnarray*}
			Sat ( p=n_1,n_r) & = & \{0, p, p+ p,\dots,p+ (k-1)p,n_r,\rightarrow\}
			\nonumber\\
			& = &\{0, p,2p,\dots,kp,kp+i,\rightarrow\}.
		\end{eqnarray*}	
		So $c=kp+i$ for some $k$ and $i\in \left\lbrace 2,3,\dots,p-1\right\rbrace$, in other words $c\equiv i \pmod p$
		$S=\{0, p,2p,\dots,kp,kp+i,\rightarrow\}=\langle p,c,c+1,\dots,c+p-i-1,c+p-i+1,\dots,c+p-1\rangle$ 
		\item [ii.] If the minimal SAT system of generators of S is $\{p=n_1,n_r+1\}$, then $n_r=kp$ for some $k$. From Theorem \ref{thm 4},  $t_1=\max \left\lbrace t\in\mathbf{N}:p+tp<kp+1\right\rbrace=k-1$ is calculated and obtained as
		\begin{eqnarray*}
			Sat ( p=n_1,n_r+1) & = & \{0, p, p+ p,\dots,p+ (k-1)p,n_r+1,\rightarrow\}
			\nonumber\\
			& = &\{0, p,2p,\dots,kp,\rightarrow\}.
		\end{eqnarray*}	
		So $c=kp$ for some $k$, in other words $c\equiv 0 \pmod p$
		$S=\{0, p,2p,\dots,kp,\rightarrow\}=\langle p,c+1,c+2,\dots,c+p-1\rangle$
	\end{enumerate}
	
\end{proof}		
It is clear that by Theorem \ref{thm1} we get the following corollary.
\begin{corollary} \label{corollary 1}
There is only one saturated numerical semigroups with multiplicity $p$  (prime) and conductor $c$.	
\end{corollary}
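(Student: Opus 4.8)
The plan is to deduce the corollary immediately from Theorem \ref{thm1}, the point being that the classification obtained there assigns to each admissible pair consisting of a multiplicity $p$ and a conductor $c$ a single, explicitly described semigroup.

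First I would invoke the remark recorded just before Theorem \ref{thm1}: since every nonnegative multiple of the multiplicity $p$ lies in $S$, we have $c - 1 \notin S$, and therefore $c \not\equiv 1 \pmod p$. Consequently the residue class of $c$ modulo $p$ belongs to $\{0, 2, 3, \dots, p-1\}$, so precisely one of the two alternatives of Theorem \ref{thm1} applies: either $c \equiv 0 \pmod p$, or $c \equiv i \pmod p$ for a \emph{unique} $i \in \{2, 3, \dots, p-1\}$.

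Next, in the first case Theorem \ref{thm1} forces $S = \langle p, c+1, c+2, \dots, c+p-1\rangle$, and in the second case it forces $S = \langle p, c, c+1, \dots, c+p-i-1, c+p-i+1, \dots, c+p-1\rangle$ with the $i$ just singled out. In either situation the generating set is determined by $p$ and $c$ alone, so $S$ is uniquely determined; this is the uniqueness half of the statement. Existence is likewise contained in Theorem \ref{thm1}: the implication showing that each displayed semigroup is saturated with multiplicity $p$ and conductor $c$ certifies that the candidate attached to $(p,c)$ is a genuine example. Putting the two together yields exactly one saturated numerical semigroup with multiplicity $p$ (prime) and conductor $c$.

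There is essentially no obstacle here; the only step deserving a moment's care is checking that the two cases of Theorem \ref{thm1}, together with the exclusion $c \not\equiv 1 \pmod p$, exhaust the possible residues of $c$ modulo $p$ without overlap, so that exactly one case is in force and the resulting semigroup is well defined. I would state this explicitly and note that it is immediate from the list $\{0, 2, 3, \dots, p-1\}$ of admissible residues.
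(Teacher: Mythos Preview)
Your proposal is correct and follows exactly the paper's approach: the paper simply states that the corollary is clear from Theorem~\ref{thm1}, and your argument is just a careful unpacking of that implication. The only difference is that you spell out the case split on residues and the exclusion $c\not\equiv 1\pmod p$ explicitly, which the paper leaves implicit.
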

		\section{Catenary degree of saturated numerical semigroups}
	\label{sec:4}
Let $S=\langle a_1<a_2<\dots<a_r\rangle$ and $s\in S$. If $Z(s)$ has more than one $\Re-$ classes, then $s=w+ai$ with
$w\in Ap(S,a_1)\setminus\{0\}$ and $i\in \left\lbrace 2,3\dots,r\right\rbrace$ \cite{Ref24}.
\paragraph{}In this section, we will calculate the catenary degree of the saturated numerical semigroups given in the Theorem \ref{thm1} by using the properties of saturated numerical semigroups and those given above.
\begin{corollary} \label{corollary 2}(\cite{Ref7}, Corollary 3) Let $S$ be a numerical semigroup minimally generated by
$\left\lbrace a_1,a_2,\dots,a_r\right\rbrace$ and let $s\in S$. If $s$ is minimal in $S$ with the condition $C(s)=C(S)$, then $s=w+a_i$ with $w\in Ap(S,a_1)\setminus\{0\}$ and $i\in \left\lbrace 2,3\dots,r\right\rbrace$.
 \end{corollary}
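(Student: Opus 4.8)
The plan is to derive Corollary~\ref{corollary 2} from the property recalled immediately above it (from \cite{Ref24}): if $Z(s)$ has more than one $\Re$-class, then $s=w+a_i$ with $w\in Ap(S,a_1)\setminus\{0\}$ and $i\in\{2,\dots,r\}$. Hence it suffices to show that if $s$ is minimal in $S$ with the condition $C(s)=C(S)$, then $Z(s)$ splits into at least two $\Re$-classes. Put $N:=C(S)$; we may assume $N>0$ (the case $N=0$ forces $S=\mathbf{N}$). For $t\in S$ and $M\in\mathbf{N}$ let $\Gamma_M(t)$ be the graph on the vertex set $Z(t)$ in which two factorizations are adjacent when their distance is at most $M$; directly from the definition of the catenary degree, $\Gamma_M(t)$ is connected exactly when $M\ge C(t)$, so in particular $\Gamma_{N-1}(s)$ is disconnected for our $s$.

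The crucial step, which I would establish first, is a ``no drop under subtraction'' statement: \emph{if $s$ is minimal with $C(s)=N$ and $u,v\in Z(s)$ lie in different connected components of $\Gamma_{N-1}(s)$, then $\mathrm{supp}(u)\cap\mathrm{supp}(v)=\emptyset$.} To prove it, assume $g:=\gcd(u,v)\ne 0$ and set $s':=s-\varphi(g)\in S$, so $s'<s$ and $u,v\ge g$ coordinatewise. One checks routinely that $z\mapsto z-g$ is a bijection from $\{z\in Z(s):z\ge g\}$ onto $Z(s')$ preserving distances --- this uses $\gcd(z_1-g,z_2-g)=\gcd(z_1,z_2)-g$ and $|z-g|=|z|-|g|$ for $z,z_1,z_2\ge g$ --- so it carries the subgraph of $\Gamma_{N-1}(s)$ induced on $\{z\ge g\}$ isomorphically onto $\Gamma_{N-1}(s')$. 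Since $u$ and $v$ lie in distinct components of $\Gamma_{N-1}(s)$ they lie in distinct components of that induced subgraph as well, hence $\Gamma_{N-1}(s')$ is disconnected, so $C(s')\ge N$ and therefore $C(s')=N$ because $C(s')\le C(S)=N$. This contradicts the minimality of $s$ and proves the statement.

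Granting this, the conclusion is immediate. Suppose the minimal $s$ had $Z(s)$ equal to a single $\Re$-class. As $\Gamma_{N-1}(s)$ is disconnected, pick $x,y\in Z(s)$ in different components of it; being in one $\Re$-class, they are joined by a chain $x=z_0,z_1,\dots,z_n=y$ in $Z(s)$ with $\mathrm{supp}(z_{j-1})\cap\mathrm{supp}(z_j)\ne\emptyset$ for every $j$. Taking the least $j$ with $z_j$ outside the $\Gamma_{N-1}(s)$-component of $x$, the pair $z_{j-1},z_j$ lies in distinct components of $\Gamma_{N-1}(s)$, so the statement above forces $\mathrm{supp}(z_{j-1})\cap\mathrm{supp}(z_j)=\emptyset$, contradicting the choice of the chain. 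Thus $Z(s)$ has at least two $\Re$-classes, and the property from \cite{Ref24} gives $s=w+a_i$ with $w\in Ap(S,a_1)\setminus\{0\}$ and $i\in\{2,\dots,r\}$.

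The main obstacle is the ``no drop under subtraction'' statement, above all the realization that the right thing to subtract is the common part $\gcd(u,v)$ of the \emph{two separated factorizations} --- not the common part of all of $Z(s)$ --- and the verification that this translation is a distance-preserving injection whose image is all of $Z(s-\varphi(g))$, so that disconnectedness of $\Gamma_{N-1}$ is inherited and the catenary degree cannot decrease. Once that monotonicity is in place, the rest is the short combinatorial argument of the last paragraph together with the cited structure of the elements with several $\Re$-classes.
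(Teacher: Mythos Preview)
The paper does not prove Corollary~\ref{corollary 2}; it merely quotes it from \cite{Ref7} (Corollary~3) and uses it as a black box in Theorems~\ref{thm2} and~\ref{thm3}. So there is no proof in the present paper to compare your argument with.

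That said, your proof is correct. The key step --- that if $s$ is minimal with $C(s)=C(S)=N$ and $u,v$ lie in different components of $\Gamma_{N-1}(s)$, then $\gcd(u,v)=0$ --- is established exactly as it should be: the translation $z\mapsto z-g$ with $g=\gcd(u,v)$ is a distance-preserving bijection from $\{z\in Z(s):z\ge g\}$ onto $Z(s')$, so disconnectedness of $\Gamma_{N-1}$ survives, forcing $C(s')=N$ and contradicting minimality. The closing $\Re$-chain argument then cleanly yields at least two $\Re$-classes, and the \cite{Ref24} fact quoted just above the corollary finishes it. This is essentially the standard route to the result in the literature; your write-up is self-contained and sound. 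The only cosmetic point is the $N=0$ aside: the corollary is implicitly stated for $r\ge2$ (otherwise there is no admissible $i$), and in that range $C(S)\ge2$, so the degenerate case never arises.
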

\begin{theorem}\label{thm2}
	Let $S$ be a numerical semigroup. If $S$ is a saturated numerical semigroup with multiplicity $p$ (prime) and conductor $c\equiv0 \pmod p$, then
	$$C(S)=2h+1$$
	where $c=ph$ for some possitive integer $h$.
\end{theorem}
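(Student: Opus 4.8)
The plan is to make $S$ explicit, cut the computation of $C(S)$ down to finitely many elements, and then read off the catenary degree from the $\Re$-class structure of each. Since $c\equiv 0\pmod p$ write $c=ph$; here $h\ge 1$, because the conductor of a numerical semigroup is at least its multiplicity. By Theorem~\ref{thm1}(i), $S=\langle p,c+1,\dots,c+p-1\rangle$, and, exactly as in the proof of that theorem, $S=\{0,p,2p,\dots,(h-1)p\}\cup\{ph,ph+1,\dots\}$. Being saturated, $S$ has maximal embedding dimension, so Lemma~\ref{lemma1}(iv) gives $Ap(S,p)=\{0,c+1,c+2,\dots,c+p-1\}$. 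By the result of \cite{Ref24} quoted just before Corollary~\ref{corollary 2}, every $s\in S$ for which $Z(s)$ has more than one $\Re$-class is of the form $s=w+a_i$ with $w\in Ap(S,p)\setminus\{0\}$ and $a_i$ a minimal generator other than $p$; that is, $s=(c+j)+(c+k)$ for some $j,k\in\{1,\dots,p-1\}$. Since $C(S)=m(S)=\max\{m(s):s\in S,\ n_s\ge 2\}$, it suffices to compute $m(s)$ for these elements and take the maximum.

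So fix $s=(c+j)+(c+k)=2hp+t$ with $t=j+k\in\{2,\dots,2p-2\}$, and write a factorization of $s$ as $(n,b_1,\dots,b_{p-1})$, meaning $np+\sum_{i=1}^{p-1}b_i(c+i)=s$. Setting $m=\sum_i b_i$ and $L=\sum_i i\,b_i$ (so $m\le L\le (p-1)m$), the factorization condition collapses to the single equation $(n+hm)p+L=2hp+t$, which together with $L\ge 0$ forces $n+hm\le 2h+1$, hence $m\le 2+1/h$ and so $m\le 3$. Running through $m\in\{0,1,2,3\}$ using $L\equiv t\pmod p$ then produces the complete list of factorizations of $s$, and this list behaves differently in the four regimes $t<p$, $t=p$, $t=p+1$, $t\ge p+2$. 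The decisive case is $t=p$ (which does occur, e.g. for $j=1$, $k=p-1$): then $s=(2h+1)p$, and the only factorizations are the ``pure'' one $(2h+1,0,\dots,0)$, of length $2h+1$, and those with $n=0$ that use exactly two of the generators $c+1,\dots,c+p-1$ whose indices sum to $p$, each of length $2$. The pure factorization has support $\{1\}$, which is disjoint from the support of every other factorization (all of which have $n=0$); hence it forms a singleton $\Re$-class of minimal length $2h+1$, giving $m\big((2h+1)p\big)=2h+1$ and in particular $C(S)\ge 2h+1$.

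The same bookkeeping in the other three regimes gives $m(s)=h+1$ when $t<p$ (the factorization consisting of $h$ copies of $p$ together with $c+t$ is isolated by its support), $m(s)=h+2$ when $t=p+1$, and $m(s)=3$ when $t\ge p+2$ (there the ``one large generator'' factorization of length $h+2$ merges, via the coordinate carrying $p$, into the $\Re$-class of the length-$3$ factorizations). In every case $s$ still has at least two $\Re$-classes, and no $\Re$-class contributes a minimal length exceeding the value listed. Since $h\ge 1$ we have $h+1\le 2h+1$, $h+2\le 2h+1$ and $3\le 2h+1$, so the maximum of $m(s)$ over all the candidate elements equals $2h+1$; therefore $C(S)=m(S)=2h+1$ with $c=ph$, as claimed.

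The crux — and the only genuine difficulty — is the second paragraph: one must verify that the exhibited factorizations really are all of them, because the support-disjointness arguments that separate $\Re$-classes are legitimate only once $Z(s)$ is known completely. The regime $t\ge p+2$ is the most delicate, since for small $h$ there really are factorizations using three of the generators $c+1,\dots,c+p-1$, and one has to check that they are absorbed into the same $\Re$-class as the length-$3$ factorizations that carry a copy of $p$.
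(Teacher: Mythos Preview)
Your argument is correct and follows the same overall plan as the paper's: reduce via the Ap\'ery set and the result from \cite{Ref24} to the finitely many candidates $s=a_j+a_k$, analyse $Z(s)$ for each, and read off $C(S)=m(S)$ from the $\Re$-class structure. The difference lies in the execution. The paper exhibits, case by case, two factorizations of each $s$ with disjoint support and then asserts that \emph{all} pairs of factorizations of $s$ have disjoint support, so that the catenary degree is simply the largest factorization length that appears. Your linear parametrization $(n+hm)p+L=2hp+t$ instead enumerates $Z(s)$ completely, and this reveals that for $t\ge p+2$ the ``long'' factorization of length $h+2$ shares the $p$-coordinate with the length-$3$ factorizations and hence lies in the \emph{same} $\Re$-class as they do---contrary to the paper's blanket disjointness claim, but harmless for the final answer, since that merged class then contributes minimum length $3\le 2h+1$ rather than $h+2$. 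So you and the paper arrive at $C(S)=2h+1$ by the same route; your bookkeeping is tighter and, in the $t\ge p+2$ regime, more accurate.
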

\begin{proof} If $S$ is the saturated numerical semigroup with multiplicity $p$ (prime) and
	conductor $c\equiv0 \pmod p$, then $S=\langle p,c+1,\dots,c+p-1\rangle$ where $c=ph$ for some positive integer $h$ from Theorem \ref{thm1}. Since $S$ is a saturated numerical semigroup, $Ap(S,p)=\left\lbrace 0,c+1,\dots,c+p-1\right\rbrace$ from Lemma \ref{lemma1}. Let $s\in S$ and led $a_j$ be a minimal generator of $S$, $s=w+a_j$ with $w\in Ap(S,p)\setminus\{0\}$ and $j\in \left\lbrace 2,3\dots,p\right\rbrace$. This implies that $s=a_j+a_k$ for $k$ and $j\in \left\lbrace 2,3\dots,p\right\rbrace$, because $S$  is a saturated numerical semigroup. Therefore, the presence of the following can be
$$a_j+a_k=2c+(j+k-2)$$
where $a_k=c+(k-1)$ and $a_k=c+(j-1)$ from the form in which $S$ is defined. Thus,
$$Ap(S,p)\setminus\{0\}+\left\lbrace c+1,\dots,c+p-1\right\rbrace=\left\lbrace 2c+2,\dots,2c+2(p-1)\right\rbrace$$

Let's consider the set of elements in the form $s=a_j+a_k$. We firstly prove that every $Z(s)$
has at least two $\Re-$ classes. Assume the contrary that there is only one $\Re-$ classes in $Z(s)$.
\begin{enumerate}
	\item [i.] Let $j=k$. Then $s=a_j+a_k=2aj$ and $2a_j\notin Ap(S,p)$. Also, $s-p=2a_j-p\in S$.	Thus, one of factorizations of $s$ is $\left( 0,\dots,0,\underbrace{2}_{j \quad th \quad  component},0,\dots,0\right)$. Note that the $j$ th component of the factorization is $2$  and the other components is $0$. On the other hand, 
	$$ s=a_j+a_k=2aj=2(c+(j-1))=c+(c+2(j-1))$$
	and  let's write $hp$ instead of $c$
	$$ s=2aj=hp+(hp+2(j-1)).$$
	Since $2\leq2(j-1)\leq2(p-1)$, we have two cases:
		\begin{itemize}
		\item[$\bullet$] If $2(j-1)<p$, then $2j-1\neq j$ and one of factorizations of $s$ is $\left(h, 0,\dots,0,\underbrace{1}_{ \left( 2j-1\right) \quad th \quad  component},0,\dots,0\right)$.
		\item[$\bullet$] If $2(j-1)>p$, then $ s=2aj=hp+(hp+2(j-1))=hp+(hp+pr_1+s_1)=(h+r_1)p+(hp+s_1)$ for some possitive integer $r_1$ and non-negative integer $s_1<p$. Where $r_1=1$  and $s_1<p-1$ due to the values of $i$ and $j$.	
	
	\begin{itemize}
			\item[Case 1.] If  $s_1=0$, then $1\neq j$ and one of factorizations of $s$ is $\left(h+1, 0,\dots,0\right)$.
			\item[Case 2.] If $s_1\neq0$, then $s_1+1\neq j$ and one of factorizations of $s$  is 
			$\left(h+1, 0,\dots,0,\underbrace{1}_{ \left( s_1+1\right) \quad th \quad  component},0,\dots,0\right)$.
\end{itemize}	
\end{itemize}		
	\item [ii.] Let $j\neq k$. Then $s=a_j+a_k$  and $a_j+a_k\notin Ap(S,p)$. Also, $s-p\in S$. Thus, one of
	factorizations of $s$ is \[ \left( 0,\dots,0,\underbrace{1}_{j\quad th \quad  component},0,\dots,0,\underbrace{1}_{k\quad th \quad  component},0,\dots,0\right)\]. Note that the $j$ th and $k$ th components of the factorization are $1$ and the other components are
$0$. We also have two cases
	\begin{itemize}
	\item[$\bullet$] If $s=a_j+a_k\equiv0 \pmod p$, then $s=a_j+a_k=2c+j+k-2=2hp+(j+k-2)$ and $j+k-2\equiv0 \pmod p$. Therefore $j+k-2=pr_1$ for some possitive integer $s_1$. Since $2<j+k-2<2p-4$, where $r_1=1$ due to the
	values of $i$ and $k$. Thus, one of factorizations of $s$ is $\left(2h, 0,\dots,0\right)$.
	\item[$\bullet$] If $s=a_j+a_k\equiv s_2\pmod p$, then $s=a_j+a_k=2c+j+k-2=2hp+(j+k-2)$ and $j+k-2\equiv s_2 \pmod p$. Therefore, $j+k-2=pr_2+s_2$ for some possitive integers $r_2$ and $s_2$. We have two cases	
	\begin{itemize}
		\item[Case 1.] If $j+k-2< p$, then $s=a_j+a_k=2c+j+k-2=2hp+s_2$ and one of factorizations of $s$ is $\left(h, 0,\dots,0,\underbrace{1}_{ s_2\quad th \quad  component},0,\dots,0\right)$.
		\item[Case 2.] If $j+k-2> p$, then $s=a_j+a_k=2c+j+k-2=2hp+s_2$ since $j+k-2<2p-4$. One of factorizations of $s$ is
		
		 $\left(h+1, 0,\dots,0,\underbrace{1}_{ \left( s_2+2\right) \quad th \quad  component},0,\dots,0\right)$.
	\end{itemize}	
\end{itemize}		
\end{enumerate}
It is known that every element in the semigroup which is involved in one of its minimal presentations has a set of factorizations with at least two $\Re-$ classes. According to the above, $Z(s)$ has at least two $\Re-$ classes. Namely, for every $ x=(x_1,\dots,x_p ),y=(y_1,\dots,y_p )$ in $Z(s)$ we can write $supp(x)\bigcap  supp(y )=\emptyset$. Thus, $ gcd(x,y) =(0,\dots,0)$. This in particular implies that $ dist(x,y) =\max\left\lbrace |x|,|y |\right\rbrace $. The catenary degree of $S$ is the maximum of the lengths of these factorizations. 
\paragraph{} Again according to the above, one easily deduces that the largest length of a factorization in $Z(s)$ is reached when $s=a_j+a_k\equiv0 \pmod p$ for $j\neq k$. The factorization of $s$ is $\left(2h+1, 0,\dots,0\right)$. Since the catenary degree is the length of this factorization, $C(S)=2h+1$ from the Corollary \ref{corollary 2}. 
\end{proof}	
\begin{theorem}\label{thm3}
	Let $S$ be a numerical semigroup. If $S$ is a saturated numerical semigroup with multiplicity $p$ (prime) and conductor $c\equiv i \pmod p$ for $i\in \left\lbrace 2,3,\dots,p-1\right\rbrace$, then
$$C(S)= \left\lbrace  \begin{array}{ll}
	2h+2 & \textrm{ if}  \quad i<\frac{p+2}{2},\\
2h+3 & \textrm{ if}\quad i>\frac{p+2}{2},
\end{array} \right.$$
	where $c=ph+i$ for some possitive integer $h$.
\end{theorem}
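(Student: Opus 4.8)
The plan is to run the argument of the proof of Theorem~\ref{thm2} in the shifted setting $c\equiv i\pmod p$, $i\geq 2$. Writing $c=ph+i$, Theorem~\ref{thm1}(ii) gives $S=\langle p,c,c+1,\dots,c+p-i-1,c+p-i+1,\dots,c+p-1\rangle=\{0,p,2p,\dots,hp,c,\rightarrow\}$, where the single generator of the block $c,c+1,\dots,c+p-1$ that is omitted is $c+p-i=(h+1)p$, a multiple of $p$. Since a saturated semigroup has maximal embedding dimension, Lemma~\ref{lemma1}(iv) gives $Ap(S,p)=\{0\}\cup(\{c,c+1,\dots,c+p-1\}\setminus\{(h+1)p\})$, whose nonzero elements are exactly the minimal generators other than $p$ (the large generators). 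By Corollary~\ref{corollary 2}, a minimal $s\in S$ with $C(s)=C(S)$ has the form $s=w+a_\ell$ with $w\in Ap(S,p)\setminus\{0\}$ and $\ell\in\{2,\dots,p\}$; as $w$ is itself a minimal generator, $s=a_j+a_k$ with $a_j=c+\alpha$, $a_k=c+\beta$, $\alpha,\beta\in\{0,1,\dots,p-1\}\setminus\{p-i\}$, and $s=2c+\alpha+\beta=2ph+2i+\alpha+\beta$.

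Next I would reproduce, for such $s$, the $\Re$-class analysis of the proof of Theorem~\ref{thm2}: split on $\alpha=\beta$ versus $\alpha\neq\beta$ and, within each case, reduce $a_j+a_k$ modulo $p$. One then has two factorizations of $s$ with disjoint support: the length-$2$ one consisting of one copy of $a_j$ and one copy of $a_k$ (support in $\{j,k\}\subseteq\{2,\dots,p\}$), and a factorization using only the generator $p$ together with at most one further large generator --- disjoint from the first because the large generators have pairwise distinct residues modulo $p$, so that, $\alpha,\beta$ being $\neq p-i$, the unique large generator congruent to $s$ (when $p\nmid s$) is neither $a_j$ nor $a_k$, while for $p\mid s$ the second factorization consists of copies of $p$ only. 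For these two, $dist(x,y)=\max\{|x|,|y|\}$; arguing as in the proof of Theorem~\ref{thm2}, one deduces that $Z(s)$ has at least two $\Re$-classes and that $C(s)$ equals the longest factorization length of $s$. I expect this $\Re$-class bookkeeping to be the main obstacle: one must carry along the extra generator $c$ and the empty slot $c+p-i=(h+1)p$, and the delicate subcase is $2i+\alpha+\beta\equiv 0\pmod p$, where the omitted generator is exactly the one that would otherwise enter a factorization of $s$ --- and this subcase is the one that attains the maximum.

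It then remains to maximize the longest factorization length of $s=2c+\alpha+\beta$ over admissible $\alpha,\beta$. If $p\mid s$, that length is $s/p=2h+\frac{2i+\alpha+\beta}{p}$; if $p\nmid s$, it is $h+1+\lfloor\frac{i+\alpha+\beta}{p}\rfloor\leq h+3$ (one copy of the least generator congruent to $s$ modulo $p$, plus copies of $p$), which is $\leq 2h+2$ for $h\geq 1$ and so never dominates. In the case $p\mid s$, write $2i+\alpha+\beta=qp$; then $q\in\{1,2,3\}$ since $4\leq 2i+\alpha+\beta\leq 4p-4$. Because $p$ is an odd prime, $2i\neq p+2$, so exactly one of $i<\frac{p+2}{2}$ and $i>\frac{p+2}{2}$ holds. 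If $i>\frac{p+2}{2}$, then $\alpha+\beta=3p-2i$ lies in $[0,2p-2]$ and is admissible, e.g. $(\alpha,\beta)=(p-1,\,2(p-i)+1)$; so $q=3$ is attained and $s=(2h+3)p$. If $i<\frac{p+2}{2}$, then $3p-2i\geq 2p-1>2p-2$, so $q\leq 2$, and $q=2$ is attained by the admissible choice $(\alpha,\beta)=(p-i-1,\,p-i+1)$, giving $s=(2h+2)p$. Hence the maximum over the $s=a_j+a_k$ furnished by Corollary~\ref{corollary 2} of the longest factorization length is $2h+2$ when $i<\frac{p+2}{2}$ and $2h+3$ when $i>\frac{p+2}{2}$, which is $C(S)$.
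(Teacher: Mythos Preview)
Your proposal is correct and follows the same approach as the paper: reduce via Corollary~\ref{corollary 2} to elements $s=a_j+a_k$, exhibit for each such $s$ two factorizations with disjoint support (the length-$2$ one and the long one built from copies of $p$), and then maximize the longest factorization length by locating the largest multiple of $p$ of the form $a_j+a_k$. Your offset parametrization $a_j=c+\alpha$ with $\alpha\in\{0,\dots,p-1\}\setminus\{p-i\}$ and the direct computation of $q=(2i+\alpha+\beta)/p\in\{1,2,3\}$ streamline the bookkeeping compared with the paper's index-based case split and its ``find the smallest such $s$, then look for a larger one'' maximization, but the underlying argument is identical.
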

\begin{proof}
	If $S$ is the saturated numerical semigroup with multiplicity $p$ (prime) and
	conductor $c\equiv i \pmod p$ for $i\in \left\lbrace 2,3,\dots,p-1\right\rbrace$, then $S=\langle p,c,c+1,\dots,c+p-i-1,c+p-i+1,\dots,c+p-1\rangle$ where $c=ph+i$ for some positive integer $h$ from Theorem \ref{thm1}. Since  $S$  is a saturated numerical semigroup, $Ap(S,p)=\left\lbrace 0,c,c+1,\dots,c+p-i-1,c+p-i+1,\dots,c+p-1\right\rbrace$  from Lemma \ref{lemma1}. Let $s\in S$ and $a_j$ be a minimal generator of $S$, $s=w+a_j$ with $w\in Ap(S,p)\setminus\{0\}$ and $j\in \left\lbrace 2,3\dots,p\right\rbrace$. Then $s=a_j+a_k$ for $k,j\in \left\lbrace 2,3\dots,p\right\rbrace$, because $S$ is a saturated numerical semigroup. Therefore, the presence of the following can be easily seen:
	$$a_j+a_k= \left\lbrace  \begin{array}{ll}
	2c+(j+k)-4 & \textrm{ if}  \quad 2 \leq j,k \leq  p-i+1,\\
	2c+(j+k)-3 & \textrm{ if}\quad \left( 2 \leq  j \leq  p-i+1\quad \textrm{ and}\quad p-i+2 \leq  k \leq  p\right) \\
 & \quad\textrm{ or}\\
		& \left( 2 \leq  k \leq  p-i+1\quad \textrm{ and}\quad p-i+2 \leq  j \leq  p\right), \\
    2c+(j+k)-2 & \textrm{ if}\quad p-i+2 \leq  j,k \leq  p
	\end{array} \right.$$
where \[a_k= \left\lbrace  \begin{array}{ll}
c+k-2 & \textrm{ if}  \quad 2 \leq k \leq  p-i+1,\\
c+k-1 & \textrm{ if}\quad p-i+2 \leq k \leq  p,
\end{array} \right.\]  and \[a_j= \left\lbrace  \begin{array}{ll}
c+j-2 & \textrm{ if}  \quad 2 \leq j \leq  p-i+1,\\
c+j-1 & \textrm{ if}\quad p-i+2 \leq j \leq  p,
\end{array} \right.\] from the form in which $S$ is defined.	Let's consider the set of elements in the form $s=a_j+a_k$. We firstly prove that every $Z(s)$ has at least two $\Re-$  classes. Assume to the contrary that there is only one $\Re-$ classes in $Z(s)$.
\begin{enumerate}
	\item [i.]	Let $j=k$. Then $s=a_j+a_k=2a_j$ and $2a_j\notin Ap(S,p)$. Also, $s-p=2a_j-p\in S$. Thus, one of factorizations of $s$  is $\left( 0,\dots,0,\underbrace{2}_{j \quad th \quad  component},0,\dots,0\right)$. Note that the
	$j$ th component of the factorization is $2$ and the other components are $0$. On the
	other hand,
	
	$$s=a_j+a_k=2a_j=\left\lbrace  \begin{array}{ll}
	2c+2j-4 & \textrm{ if}  \quad 2 \leq j \leq  p-i+1,\\
2c+2j-2 & \textrm{ if}\quad p-i+2 \leq j \leq  p,
	\end{array} \right.$$
	and let's write $c=hp+i$ instead of $c$
$$s=2a_j=\left\lbrace  \begin{array}{ll}
hp+(hp+i)+(i+2j-4) & \textrm{ if}  \quad 2 \leq j \leq  p-i+1,\\
hp+(hp+i)+(i+2j-2) & \textrm{ if}\quad p-i+2 \leq j \leq  p.
\end{array} \right.$$
		\begin{itemize}
		\item[$\bullet$] If $2 \leq j \leq  p-i+1$, then we have three cases.	
		\begin{itemize}
			\item[Case 1.] If $2 \leq i+2j-4 \leq  p-i-1$, then $i+2j-2\neq j$ and one of
			factorizations of $s$ is $\left(h, 0,\dots,0,\underbrace{1}_{ (i+2j-2)\quad th \quad  component},0,\dots,0\right)$.
			\item[Case 2.] If $ p-i-1 \leq i+2j-4 \leq  p-1$, then $i+2j-3\neq j$ and one of
			factorizations of $s$ is $\left(h, 0,\dots,0,\underbrace{1}_{ (i+2j-3)\quad th \quad  component},0,\dots,0\right)$.
	     	\item[Case 3.] If $ i+2j-4 \geq p $, then $s=2a_j=hp+(hp+i)+(i+2j-4)=hp+(hp+i)+pr_1+s_1=(h+r_1)p+(hp+i)+s_1$ for some possitive integers $r_1$ and non-negative integer $s_1<p$. Since 
	     	 $ \max(2a_j)=2c+2p-2i-2=hp+(hp+i)+p+(p-i-2)$ for $2 \leq j \leq  p-i+1$, where 	$r_1=1$ and $s_1<p-i-2$ due to the values of $i$ and $j$. Since  $ 0\leq s_1\leq  p-i-2\leq p-i-1$ and $s_1+2\neq j$, one of factorizations of $s$ is $\left(h+1, 0,\dots,0,\underbrace{1}_{ (s_1+2)\quad th \quad  component},0,\dots,0\right)$.
	     	\end{itemize}
	\item[$\bullet$] If $p-i+2 \leq j \leq  p$, then $i+2j-2>p$. Thus, $s=2a_j=hp+(hp+i)+(i+2j-2)=hp+(hp+i)+pr_2+s_2=(h+r_2)p+(hp+i)+s_2$ for some positive integer $r_2$ and nonnegative integer $s_2$ with $s_2<p$. Since $ \max(2a_j)=2c+2p-2=hp+(hp+i)+(2p+i-2)$ for $p-i+2 \leq j \leq  p$, where $r_2=1$ or $r_2=2$ and $s_2<p$ due to the values of $i$ and $j$. We have two cases.
	\begin{itemize}
		\item[Case 1.] If $0 \leq s_2 \leq p-i+1$, then $s_2+2\neq j$ and one of factorizations of $s$ is $\left(h+r_2, 0,\dots,0,\underbrace{1}_{ (s_2+2)\quad th \quad  component},0,\dots,0\right)$.
		\item[Case 2.] If $p-i+2 \leq s_2 \leq p$, then $s_2+1\neq j$  and one of factorizations of $s$ is $\left(h+r_2, 0,\dots,0,\underbrace{1}_{ (s_2+1)\quad th \quad  component},0,\dots,0\right)$.
	\end{itemize}	
	\end{itemize}
when $s=a_j+a_k=2a_j$  for $j=k$, other factorizations of $s$ we get are the difference from
$\left( 0,\dots,0,\underbrace{2}_{j \quad th \quad  component},0,\dots,0\right)$. These factorizations and $\left( 0,\dots,0,\underbrace{2}_{j \quad th \quad  component},0,\dots,0\right)$ are different $\Re-$ classes in $Z(s)$. This, in particular, would mean that there is a factorization $\left( s_1,\dots,s_p\right)$ of $s$ different from  $\left( 0,\dots,0,\underbrace{2}_{j \quad th \quad  component},0,\dots,0\right)$ such that $supp(\left( s_1,\dots,s_p\right))\bigcap  supp(\left( 0,\dots,0,\underbrace{2}_{j \quad th \quad  component},0,\dots,0\right))=\emptyset$. This contradicts with our acceptance.		
	\item [ii.]	Let $j\neq k$. Then $s=a_j+a_k$ and $a_j+a_k\notin Ap(S,p)$. Also, $s-p\in S$. Thus, one of factorizations of  $s$ is \[\left( 0,\dots,0,\underbrace{1}_{j\quad th \quad  component},0,\dots,0,\underbrace{1}_{k\quad th \quad  component},0,\dots,0\right)\]. Note that the $j$ th and $k$ th components of the factorization are $1$ and the other components are $0$. We also have two cases.
	\begin{itemize}
		\item[$\bullet$] If $s=a_j+a_k\equiv 0 \pmod p$, then $a_j+a_k=pr_3$ for some possitive integer $r_3$. Since  $2c+2p-2i\leq a_j+a_k \leq 2c+(2p-3)$, where $r_3=2h+2$ or $r_3=2h+3$  due to the values of $i$ and $j$. Thus, one of factorizations of $s$ is $\left( r_3,0,\dots,0\right)$. Note that the first component of the factorization is $r_3$ and the other components are $0$.	
		\item[$\bullet$] If $s=a_j+a_k\equiv s_4 \pmod p$, then $a_j+a_k=pr_4+ s_4$ for some possitive integers $r_4$ and $s_4$. Since  $2c+1\leq a_j+a_k \leq 2c+(2p-3)$, where $2h \leq r_4 \leq 2h+2$ and  $1 \leq s_4 \leq p-1$ due to the values of $i$ and $j$. We can write $a_j+a_k=pr_4+ s_4=(r_4-h)p+(hp+i)+(s_4-i)$. Therefore, we have three cases.
		\begin{itemize}
			\item[Case 1.] If $0 \leq s_4 -i\leq p-i+1$, then $s_4-i+2\neq k,j$ and one of factorizations of $s$ is $\left(r_4 -h, 0,\dots,0,\underbrace{1}_{ (s_4-i+2)\quad th \quad  component},0,\dots,0 \right)$.
			\item[Case 2.] If $p-i +2\leq s_4 -i\leq p$, then $s_4-i+1\neq k,j$ and one of factorizations of $s$ is $\left(r_4 -h, 0,\dots,0,\underbrace{1}_{ (s_4-i+1)\quad th \quad  component},0,\dots,0\right)$.
			\item[Case 3.] If $s_4 -i\leq 0$, then $a_j+a_k=pr_4+ s_4=(r_4-h-1)p+(hp+i)+(p+s_4-i)$. Since $p-i +1\leq p+s_4 -i\leq p$ and $p+s_4-i+1\neq k,j$ one of factorizations of $s$ is $\left(r_4 -h-1, 0,\dots,0,\underbrace{1}_{ (p+s_4-i+1)\quad th \quad  component},0,\dots,0\right)$.	
		\end{itemize}	
	\end{itemize}
when $s=a_j+a_k$ for $j\neq k$ , other factorizations of $s$ we get are the difference from $\left( 0,\dots,0,\underbrace{1}_{j\quad th \quad  component},0,\dots,0,\underbrace{1}_{k\quad th \quad  component},0,\dots,0\right)$. These factorizations and \[\left( 0,\dots,0,\underbrace{1}_{j\quad th \quad  component},0,\dots,0,\underbrace{1}_{k\quad th \quad  component},0,\dots,0\right)\] are different $\Re-$ classes in $Z(s)$. This, in particular, would mean that there is a factorization $\left( s_1,\dots,s_p\right)$ of $s$ different from
 \[\left( 0,\dots,0,\underbrace{1}_{j\quad th \quad  component},0,\dots,0,\underbrace{1}_{k\quad th \quad  component},0,\dots,0\right)\] such
that \[supp(\left( s_1,\dots,s_p\right))\bigcap  supp( \left( 0,\dots,0,\underbrace{1}_{j\quad th \quad  component},0,\dots,0,\underbrace{1}_{k\quad th \quad  component},0,\dots,0 \right))=\emptyset.\] This contradicts our
acceptance.	
\end{enumerate}
\paragraph{} It is known that every element in the semigroup involved in one of its minimal presentations has a set of factorizations with at least two $\Re-$ classes. According to the above, $Z(s)$ has at least two $\Re-$ classes. Namely, for every $ x=(x_1,\dots,x_p ),y=(y_1,\dots,y_p )$ in $Z(s)$ we can write $supp(x)\bigcap  supp(y )=\emptyset$. Thus, $ gcd(x,y) =(0,\dots,0)$. This in particular implies that $ dist(x,y) =\max\left\lbrace |x|,|y |\right\rbrace $. Therefore, the catenary degree of $S$ is the maximum of the lengths of these factorizations. 
\paragraph{} Again according to the above, one easily deduces that the largest length of a factorization in $Z(s)$ is reached when $s=a_j+a_k\equiv0 \pmod p$ for $j\neq k$. Since $ p< c< c+1<\dots< c+p-i-1< c+p-i+1<\dots<c+p-1 $, the smallest $s$ that meets these conditions $\min(s)=\min(a_j+a_k)=(c+p-i-1)+(c+p-i+1)=2c+2p-2i$. But can we find another element $s$ larger than $2c+2p-2i$. Namely, there is an element $s$ in $Z(s)$  with $2c+2p-2i<s <2c+2(p-1)$? Since $s=a_j+a_k\equiv0 \pmod p$  for $j\neq k$, if there is, then $s=a_j+a_k=2c+2p-2i+pk<2c+2(p-1)$ for some possitive integers $k$. When we make the necessary cancellations, the inequality $i>\frac{pk+2}{2}$. Where $k=0$ or $k=1$ due to the values of $i$. Thus, we have two cases:
\begin{itemize}
	\item[Case 1.] If $i>\frac{pk+2}{2}$, then $\max(s)=\max(a_j+a_k)=2c+2p-2i+p=2hp+2i+3p-2i=2h+3$. And the factorization of $s$ is $\left(2h+3, 0,\dots,0\right)$. Since the catenary degree is the length of this factorization,  $C(S)=2h+3$ from the Corollary \ref{corollary 2}.
	\item[Case 2.] If other cases, namely  $i<\frac{p+2}{2}$, then $\max(s)=\max(a_j+a_k)=2c+2p-2i=2hp+2i+2p-2i=2h+2$. And the factorization of $s$ is $\left(2h+2, 0,\dots,0\right)$. Since the catenary degree is the length of this factorization,  $C(S)=2h+2$ from the Corollary \ref{corollary 2}.
\end{itemize}	
\end{proof}	

\begin{example}
Let $S$ be the saturated numerical with multiplicity  $5$ and conductor  $33$. Then $S=<5,33,34,36,37>=\left\lbrace 0,5,10,15,20,25,30,33,\rightarrow \right\rbrace $. Where $p=5$, $i=3$ and $h=6$. The set of
elements of the form $w+n$ with $n\in \left\lbrace 33,34,36,37\right\rbrace $ and $w\in Ap(S,5)\setminus\{0\}=\left\lbrace 33,34,36,37\right\rbrace$ is $\left\lbrace 66,67,68,69,70,71,72,73,74\right\rbrace$. The factorizations of these elements are the
following:
$$ Z(66)=\left\lbrace \left(0, 2,0,0,0\right), \left(6,0,0,1,0\right)\right\rbrace $$
$$ Z(67)=\left\lbrace \left(0, 1,1,0,0\right), \left(6,0,0,0,1\right)\right\rbrace $$
$$ Z(68)=\left\lbrace \left(0, 0,2,0,0\right), \left(7,1,0,0,0\right)\right\rbrace $$
$$ Z(69)=\left\lbrace \left(0, 1,0,1,0\right), \left(7,0,1,0,0\right)\right\rbrace $$
$$ Z(70)=\left\lbrace \left(14, 0,0,0,0\right), \left(0,1,0,0,1\right),\left(0,0,1,1,0\right) \right\rbrace $$
$$ Z(71)=\left\lbrace \left(0, 0,1,0,1\right), \left(7,0,0,1,0\right)\right\rbrace $$
$$ Z(72)=\left\lbrace \left(0, 0,0,2,0\right), \left(7,0,0,0,1\right)\right\rbrace $$
$$ Z(73)=\left\lbrace \left(0, 0,0,1,1\right), \left(8,1,0,0,0\right)\right\rbrace $$
$$ Z(74)=\left\lbrace \left(0, 0,0,0,2\right), \left(8,0,1,0,0\right)\right\rbrace $$

Each element of $Z(s)$ is in the different $\Re-$ classes. The catenary degree of $S$ is reached for in
$70$. The catenary degree of $S$ is $14$. Moreover, since $i=3<\frac{5+2}{2}=\frac{p+2}{2}$ and $h=6$, it can
easily be found that $C(S)=2h+2=(2\cdot6)+2=14$ from the Theorem \ref{thm3}
\end{example}

\end{document}